\newtheorem{theorem}{Theorem}
\newtheorem{lemma}[theorem]{Lemma}
\newtheorem{definition}[theorem]{Definition}
\newtheorem{corollary}[theorem]{Corollary}
\newtheorem{proposition}[theorem]{Proposition}
\newtheorem{question}[theorem]{Question}
\newtheorem{conjecture}[theorem]{Conjecture}
\theoremstyle{definition}
\newtheorem{example}[theorem]{Example}
\newcommand{\M}{\mathbb{M}}
\begin{document}

\title{Polynomials of small degree evaluated on matrices}
\author{Zachary Mesyan}

\maketitle

\begin{abstract}

A celebrated theorem of Shoda states that over any field $K$ (of characteristic $0$), every matrix with trace $0$ can be expressed as a commutator $AB-BA$, or, equivalently, that the set of values of the polynomial $f(x, y)=xy-yx$ on $\M_n(K)$ contains all matrices with trace $0$. We generalize Shoda's theorem by showing that every nonzero multilinear polynomial of degree at most $3$, with coefficients in $K$, has this property. We further conjecture that this holds for every nonzero multilinear polynomial with coefficients in $K$ of degree $m$, provided that $m-1 \leq n$.

\medskip

\noindent
\emph{Keywords:}  multilinear polynomial, matrix, trace

\noindent
\emph{2010 MSC numbers:} 15A54, 16S36, 16S50

\end{abstract}

\section{Introduction}

We begin by recalling a theorem, which was originally proved for fields of characteristic $0$ by Shoda~\cite{Shoda} and later extended to all fields by Albert and Muckenhoupt~\cite{AM}.

\begin{theorem}[Shoda/Albert/Muckenhoupt]\label{Shoda}
Let $K$ be any field, $n\geq 2$ an integer, and $M \in \M_n(K)$. If $M$ has trace $\, 0$, then $M = AB-BA$ for some $A, B \in \M_n(K)$.
\end{theorem}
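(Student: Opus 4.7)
The strategy is to split the argument into two stages: a similarity reduction producing a zero-diagonal matrix, and an explicit commutator construction for such matrices.

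For the reduction, I would prove by induction on $n$ that every trace-zero $M \in \M_n(K)$ is similar to a matrix all of whose diagonal entries are $0$. The key lemma is that any non-scalar $M \in \M_n(K)$ is similar to a matrix with $(1,1)$-entry equal to $0$: pick $v \in K^n$ such that $v$ and $Mv$ are linearly independent (such $v$ exists precisely when $M$ is non-scalar, since otherwise every nonzero vector would be an eigenvector of $M$, forcing all eigenvalues to coincide), and extend $\{v, Mv\}$ to a basis of $K^n$; in this basis the first column of $M$ is $(0, 1, 0, \ldots, 0)^T$. Applied to a trace-zero $M$, the lemma produces a similar matrix of block form $\begin{pmatrix} 0 & u \\ w & M' \end{pmatrix}$ with $\tr(M') = 0$, and conjugating by $\mathrm{diag}(1, P)$ lets us invoke the induction hypothesis on $M'$ to zero out the remaining diagonal entries.

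For the second stage, given $N \in \M_n(K)$ with zero diagonal, I would take $A = \mathrm{diag}(a_1, \ldots, a_n)$ with pairwise distinct $a_i \in K$; then the identity $(AB-BA)_{ij} = (a_i - a_j)\, b_{ij}$ shows that setting $b_{ij} = N_{ij}/(a_i - a_j)$ for $i \neq j$ and $b_{ii} = 0$ solves $N = AB - BA$.

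The principal obstacle is the scalar case in the inductive reduction: if $\ch(K)$ divides $n$, a nonzero scalar matrix can have trace $0$, and the similarity lemma fails on scalars. A companion difficulty is that the construction in the second stage requires $|K| \geq n$ in order to select $n$ distinct diagonal entries. Both issues arise genuinely only over certain fields and constitute the content of Albert and Muckenhoupt's extension of the characteristic-$0$ argument; handling them cleanly would require either treating the problematic scalar/small-field cases separately (for instance by reducing to a nonderogatory companion-like canonical form, whose commutator factorization can be written down by hand) or passing through a base change to a larger field and descending the resulting expression back to $K$.
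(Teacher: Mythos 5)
The paper does not prove this theorem: it is quoted from Shoda \cite{Shoda} and Albert--Muckenhoupt \cite{AM}, so there is no in-paper proof to compare against. Your two-stage outline (similarity reduction to a zero-diagonal matrix, then an explicit commutator against a diagonal matrix with distinct entries) is the standard one, and it is precisely the machinery the paper later imports as Proposition \ref{Amitsur} and Lemma \ref{Amitsur2} in order to prove Theorem \ref{mainth} --- notably under the \emph{added} hypothesis that $K$ has at least $n$ elements. In characteristic $0$ your argument is essentially complete: the field is infinite, and a scalar matrix of trace $0$ is forced to be $0$ at every stage of the induction, so both of your flagged difficulties vanish and you recover Shoda's original theorem.

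For the statement as given (an arbitrary field), the two difficulties you flag are genuine gaps, and the proposal does not close them. First, the scalar obstruction is worse than you describe: even when $\ch(K)$ does not divide $n$, the inductive step can produce a nonzero scalar block $M'$ of size $n-1$ with $\tr(M')=0$ (this happens whenever $\ch(K)$ divides $n-1$), so the induction can break at an intermediate stage, not just at the top level; what one must actually prove is that every \emph{non-scalar} matrix is similar to one with at most one nonzero diagonal entry (cf.\ Proposition \ref{Amitsur}), and the nonzero-scalar trace-zero case then requires a separate explicit construction. Second, the proposed remedies are not carried out, and one of them does not work: if you write $M=AB-BA$ over an extension field $L\supseteq K$, there is no standard way to descend $A$ and $B$ back to $\M_n(K)$ --- being a \emph{single} commutator is not a property that obviously transfers down a field extension (the delicacy of such questions is illustrated by the Rosset--Rosset example cited in the Introduction). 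The companion-form route can be made to work and is close to what Albert and Muckenhoupt actually do, but as written it is a pointer rather than a proof. In summary: correct and complete for $\ch(K)=0$, honest but incomplete for the general statement.
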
 

For any $A, B \in \M_n(K)$ the traces of $AB$ and $BA$ are equal, and therefore any matrix that can be expressed as a commutator ($AB-BA$) must have trace $0$. (This observation is perhaps what prompted Shoda to prove the above result.) Thus, using $[\M_n(K), \M_n(K)]$ to denote the $K$-subspace of $\M_n(K)$ consisting of the matrices of trace $0$, and $f(\M_n(K))$ to denote the set of values of a polynomial $f$ on $\M_n(K)$, the above theorem can be rephrased as follows.

\begin{corollary} \label{ShodaCor}
Let $K$ be any field, $n\geq 2$ an integer, and $f(x, y) = xy-yx$. Then $f(\M_n(K)) = [\M_n(K), \M_n(K)]$. 
\end{corollary}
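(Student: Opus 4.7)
The plan is to prove the two set-theoretic inclusions separately, and both are essentially immediate from material already in the excerpt.

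For the inclusion $f(\M_n(K)) \subseteq [\M_n(K), \M_n(K)]$, I would invoke the standard fact that $\tr(AB) = \tr(BA)$ for any $A, B \in \M_n(K)$, which is the observation the author already highlighted in the paragraph preceding the corollary. This gives $\tr(AB - BA) = 0$, so every value of $f$ lies in $[\M_n(K), \M_n(K)]$.

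For the reverse inclusion $[\M_n(K), \M_n(K)] \subseteq f(\M_n(K))$, I would simply quote Theorem \ref{Shoda}: given $M$ of trace $0$, the theorem supplies $A, B$ with $M = AB - BA = f(A, B)$, so $M \in f(\M_n(K))$.

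There is no real obstacle here; the corollary is just a rephrasing of Shoda's theorem in the language of polynomial values. The only thing worth being careful about is to note that both directions hold over an arbitrary field $K$ (not just characteristic $0$), which is why the Albert--Muckenhoupt strengthening of the original theorem is cited. Since the whole paper will hinge on phrasing things in terms of the image set $f(\M_n(K))$, this corollary is really just a translation step that fixes notation for what follows.
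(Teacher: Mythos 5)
Your proposal is correct and matches the paper exactly: the paper presents this corollary as an immediate rephrasing of Theorem~\ref{Shoda}, with the forward inclusion coming from $\mathrm{trace}(AB)=\mathrm{trace}(BA)$ (noted in the preceding paragraph) and the reverse inclusion being Theorem~\ref{Shoda} itself.
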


It is therefore natural to ask what other polynomials $f$ have the property $f(\M_n(K)) = [\M_n(K), \M_n(K)]$, or more generally, $f(\M_n(K)) \supseteq [\M_n(K), \M_n(K)]$. In this note we conjecture that this is the case for all nonzero multilinear polynomials $f$ (i.e., polynomials that are linear in each variable), provided that the degree of $f$ is at most $n+1$. We then prove this conjecture for multilinear polynomials of degree at most $3$, over fields with at least $n$ elements, and characterize the polynomials that satisfy $f(\M_n(K)) = [\M_n(K), \M_n(K)]$ in this situation. We also prove that for any unital ring $R$, the set of values of the polynomial $xzy-xyz+yzx-zyx$ on $\M_n(R)$ contains all matrices with the property that the sums of the elements along the diagonals above (and including) the main diagonal are $0$.

Since $[\M_n(K), \M_n(K)]$ is a $K$-subspace of $\M_n(K)$, our conjecture and result above can be viewed as a partial answer to a more specific version of the following question of Lvov~\cite[Entry 1.98]{DN}, which also has been attributed to Kaplansky (see~\cite{KMR}).

\begin{question}[Lvov]\label{mainq}
Let $f$ be a multilinear polynomial over a field $K$. Is the set of values of $f$ on the matrix algebra $\, \M_n(K)$ a vector space?
\end{question}

Kanel-Belov, Malev, and Rowen~\cite{KMR} have answered this question in the case where $n=2$ and $K$ is quadratically closed (that is, every non-constant quadratic polynomial over $K$ has a root in $K$). More specifically, they showed that in this case, the image of a multilinear polynomial must be one of $0$, $K\cdot I_2$ (where $I_2$ is the identity matrix), $[\M_2(K), \M_2(K)]$, or $\M_2(K)$. 

Other special cases of Question~\ref{mainq} have been explored elsewhere. For instance, Khurana and Lam~\cite[Corollary 3.16]{DL} showed that the set of values of the multilinear polynomial $xyz-zyx$ on $\M_n(K)$, over an arbitrary field $K$, is all of $\M_n(K)$, when $n\geq 2$ (along with more general versions of this result for other rings $R$ in place of the field $K$). Also, Kaplansky~\cite[Problem 16]{Kap} asked whether there exists a nonzero multilinear polynomial over a field $K$ which always takes values in the center of $\M_n(K)$, or, equivalently, whether there is such a polynomial whose set of values on $\M_n(K)$ is the center of $\M_n(K)$. (This question arose in the study of polynomial identities on rings.) Such polynomials have indeed been constructed by Formanek~\cite{Formanek} and Razmyslov~\cite{Razmyslov}.

All the results and questions mentioned above are stated for multilinear (rather than arbitrary) polynomials, since in general, the set of values of a polynomial on $\M_n(K)$ is not a subspace. More specifically, Chuang~\cite{Chuang} showed that if $K$ is finite, then every subset $S$ of $\M_n(K)$ that contains $0$ and is closed under conjugation is the image of some polynomial. Of course, such subsets $S$ of $\M_n(K)$ are generally not $K$-subspaces. Let us also give an example of a (non-multilinear) polynomial $f$ and an infinite field $K$ such that $f(\M_n(K))$ is not a $K$-subspace of $\M_n(K)$.

\begin{example}
Let $K$ be an algebraically closed field, $n \geq 2$, and $f(x) = x^n$. For all distinct $1\leq i,j \leq n$ we have $f(E_{ii}) = E_{ii}$ and $f(E_{ii}+E_{ij})=E_{ii}+E_{ij}$, where $E_{ij}$ are the matrix units. Hence the $K$-subspace generated by $f(\M_n(K))$ contains all $E_{ij}$, and therefore must be $\M_n(K)$. Thus, to conclude that $f(\M_n(K))$ is not a subspace, it suffices to show that $f(\M_n(K)) \neq \M_n(K)$. Let $A \in \M_n(K)$ be a nonzero nilpotent matrix. Since $0$ is the only eigenvalue of $A$, the Jordan canonical form of $A$ must be strictly upper-triangular, from which it follows that $A^n=0$. Thus, if $A=f(B)=B^n$ for some $B \in \M_n(K)$, then $B$ must be nilpotent, and hence $f(B)=B^n=0$ (by the above argument), contradicting $A \neq 0$. Therefore, $f(\M_n(K))$ contains no nonzero nilpotent matrices, and hence $f(\M_n(K))$ is not a subspace of $\M_n(K)$.  \hfill $\Box$
\end{example}

We conclude the Introduction by mentioning that there has been interest in generalizing Theorem~\ref{Shoda} in other directions, particularly, in determining whether the same statement holds over an arbitrary (unital) ring. That is, given a ring $R$, can every matrix $M \in \M_n(R)$ having trace $0$ be expressed as a commutator $M = AB-BA$, for some $A, B \in \M_n(R)$? Intriguingly, this question remained open until 2000, when Rosset and Rosset~\cite{R&R} produced a ring $R$ and a matrix $M \in \M_2(R)$ having trace $0$ that cannot be expressed as a commutator. On the other hand, it turns out that over any unital ring $R$ and for any $n \geq 2$, every matrix $M \in \M_n(R)$ having trace $0$ can be expressed as a sum of two commutators (see~\cite[Theorem 15]{ZM1}). To the best of our knowledge, however, there is still no classification of the rings $R$ with the property that every matrix with trace $0$ is a commutator in $\M_n(R)$, despite continued attention to the question.

\section{Preliminaries}

Let us next define multilinear polynomials more rigorously, and then collect some basic facts about them.

\begin{definition} \label{multilin-def}
Given a field $K$ and a positive integer $m$, we denote by $K \langle x_1, \dots, x_m \rangle$ the $K$-algebra freely generated by the $($non-commuting$)$ variables $x_1, \dots, x_m$. A polynomial $f(x_1, \dots, x_m) \in K \langle x_1, \dots, x_m \rangle$ is said to be \emph{multilinear} $($\emph{of degree} $m$$)$ if it is of the form $$f(x_1, \dots, x_m) = \sum_{\sigma \in S_m} a_\sigma x_{\sigma(1)} x_{\sigma(2)} \dots x_{\sigma(m)},$$ where $S_m$ is the group of all permutations of $\, \{1,\dots, m\}$ and $a_\sigma \in K$. 
\end{definition}

Some authors call these polynomials \emph{homogeneous multilinear}. The motivation for calling such polynomials ``multilinear" is that given any $K$-algebra $A$, any element $f(x_1, \dots, x_m) \in K \langle x_1, \dots, x_m \rangle$ can be viewed as a map $f: A \times \dots \times A \rightarrow A$ by evaluating the $x_i$ on elements of $A$, and it is easy to see that the polynomials of the above form are precisely the ones that give rise to maps $f: A \times \dots \times A \rightarrow A$ that are linear in each variable (for every $K$-algebra $A$). That is, maps $f$ such that for all $a, b \in K$, $j \in \{1, \dots, m\}$, and $r_1, \dots, r_m, r_j' \in A$, we have $$f(r_1, \dots, r_{j-1}, ar_j+br_j', r_{j+1}, \dots, r_m)$$ $$= af(r_1, \dots, r_{j-1}, r_j, r_{j+1}, \dots, r_m) + bf(r_1, \dots, r_{j-1}, r_j', r_{j+1}, \dots, r_m).$$

Given a polynomial $f(x_1, \dots, x_m) \in K \langle x_1, \dots, x_m \rangle$ and $K$-algebra $A$, we set $f(A) = \{f(r_1, \dots, r_m) \mid r_1, \dots, r_m \in A\}$. Also, for any associative ring $R$ we shall denote by $[r,p]$ the commutator $rp-pr$ of $r,p \in R$, denote by $[R,R]$ the additive subgroup of $R$ generated by the commutators, and let $[r,R] = \{[r,p] \mid p \in R\}$ for $r \in R$.

In the next lemma we record a couple basic facts about multilinear polynomials for later reference. Both claims follow immediately from Definition~\ref{multilin-def}.

\begin{lemma} \label{MultilinPoly}
Let $K$ be a field, $A$ a $K$-algebra, $m$ a positive integer, and $f(x_1, \dots, x_m) \in K \langle x_1, \dots, x_m \rangle$ a multilinear polynomial. Also, let $r_1, \dots, r_m \in A$ be arbitrary elements. Then the following hold.
\begin{enumerate}
\item[$(1)$] For any $a \in K$ we have $af(r_1, \dots, r_m) = f(ar_1, \dots, r_m)$.
\item[$(2)$] For any invertible $p \in A$ we have $pf(r_1, \dots, r_m)p^{-1} = f(pr_1p^{-1} , \dots, pr_mp^{-1} ).$
\end{enumerate}
\end{lemma}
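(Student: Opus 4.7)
The plan is to verify both claims by direct expansion of the sum presentation of $f$ given in Definition~\ref{multilin-def}; no extra machinery is required, and there is essentially no obstacle beyond bookkeeping. I would treat the two parts independently but in parallel.

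For part (1), I would substitute into the defining sum to obtain $f(ar_1, r_2, \dots, r_m) = \sum_{\sigma \in S_m} a_\sigma y_{\sigma(1)} \cdots y_{\sigma(m)}$, where $y_1 = ar_1$ and $y_i = r_i$ for $i \geq 2$. Because each $\sigma \in S_m$ is a bijection, every monomial contains exactly one factor equal to $y_1 = ar_1$, namely in the unique position $k$ with $\sigma(k) = 1$. Using the compatibility of the $K$-module and multiplicative structures of the $K$-algebra $A$ (i.e., $(au)v = a(uv) = u(av)$ for $a \in K$ and $u, v \in A$), one can pull the scalar $a$ to the front of each monomial, giving $a \sum_{\sigma \in S_m} a_\sigma r_{\sigma(1)} \cdots r_{\sigma(m)} = a f(r_1, \dots, r_m)$.

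For part (2), I would expand $pf(r_1, \dots, r_m)p^{-1} = \sum_{\sigma \in S_m} a_\sigma\, p\, r_{\sigma(1)} r_{\sigma(2)} \cdots r_{\sigma(m)}\, p^{-1}$ and then insert $m-1$ copies of the identity $p^{-1}p$ between consecutive factors to rewrite each monomial as $(pr_{\sigma(1)}p^{-1})(pr_{\sigma(2)}p^{-1}) \cdots (pr_{\sigma(m)}p^{-1})$. Summing over $\sigma$ then directly recovers $f(pr_1p^{-1}, \dots, pr_mp^{-1})$. The only subtlety worth flagging, and one which really only pertains to part (1), is the placement of the scalar $a$: in a general $K$-algebra one must invoke the bilinearity axiom rather than assuming $K$ sits inside $A$ as a central subring abstractly. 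Otherwise the author's description of these facts as \emph{immediate} from the definition is accurate, and the lemma is being recorded only for use in the later arguments.
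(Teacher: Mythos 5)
Your proof is correct and is exactly the routine verification the paper has in mind: the paper gives no written proof, stating only that both claims ``follow immediately from Definition~\ref{multilin-def},'' and your expansion of the monomials (pulling the scalar through via the $K$-algebra axioms for (1), inserting $p^{-1}p$ between factors for (2)) is precisely that immediate argument spelled out.
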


The next lemma, which is an easy consequence of Theorem~\ref{Shoda}, describes the sets of values of multilinear polynomials of degree at most $2$ on matrix algebras.

\begin{lemma}\label{m<3}
Let $K$ be a field, $m$ a positive integer, and $f(x_1, \dots, x_m) \in K\langle x_1, \dots, x_m \rangle$ a multilinear polynomial.
\begin{enumerate}
\item[$(1)$] If $m=1$, then $f(\M_n(K)) \in \{0, \M_n(K)\}$.
\item[$(2)$] If $m=2$, then $f(\M_n(K)) \in \{0, [\M_n(K), \M_n(K)], \M_n(K)\}$.
\end{enumerate}
\end{lemma}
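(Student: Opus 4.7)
The plan is to handle each part by a straightforward case analysis on the coefficients, invoking Theorem~\ref{Shoda} only in the one genuinely nontrivial case.

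For part (1), every multilinear polynomial of degree $1$ has the form $f(x) = a x$ for some $a \in K$. If $a = 0$, then $f(\M_n(K)) = \{0\}$; otherwise, multiplication by $a$ is a $K$-linear bijection on $\M_n(K)$, so $f(\M_n(K)) = \M_n(K)$.

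For part (2), I would write an arbitrary multilinear polynomial of degree $2$ as $f(x,y) = a_1 xy + a_2 yx$ with $a_1, a_2 \in K$, and split into three cases according to the pair $(a_1,a_2)$. First, if $a_1 = a_2 = 0$, then $f \equiv 0$ and $f(\M_n(K)) = \{0\}$. Second, if $a_1 + a_2 \neq 0$, then specializing $x$ to the identity matrix yields
\[
f(I_n, y) = (a_1 + a_2) y,
\]
and as $y$ ranges over $\M_n(K)$ this already covers all of $\M_n(K)$, so $f(\M_n(K)) = \M_n(K)$. Third, if $(a_1,a_2) \neq (0,0)$ but $a_1 + a_2 = 0$, then $a_2 = -a_1$ with $a_1 \neq 0$, so $f(x,y) = a_1(xy - yx)$. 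By Corollary~\ref{ShodaCor}, the set of values of $xy-yx$ is exactly $[\M_n(K),\M_n(K)]$, and since $a_1 \in K^\times$ scales this $K$-subspace bijectively onto itself (using Lemma~\ref{MultilinPoly}(1), or just the fact that $[\M_n(K),\M_n(K)]$ is a $K$-subspace), we conclude $f(\M_n(K)) = [\M_n(K),\M_n(K)]$.

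These three cases exhaust all possibilities for $(a_1,a_2) \in K^2$, and in each case $f(\M_n(K))$ lands in the prescribed list. There is really no obstacle to speak of here: the only nontrivial ingredient is Shoda's theorem in the commutator case, and the rest is reading off the coefficients and substituting $x = I_n$ when the coefficients fail to cancel.
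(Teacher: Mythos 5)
Your proof is correct and follows essentially the same route as the paper: the same coefficient case split, the same substitution $x = I_n$ when $a_1 + a_2 \neq 0$, and the same reduction to Corollary~\ref{ShodaCor} via Lemma~\ref{MultilinPoly}(1) in the commutator case. No issues.
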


\begin{proof}
If $m=1$, then $f$ must be of the form $f(x)=ax$ for some $a \in K$, from which (1) follows.

If $m=2$, then $f$ must be of the form $f(x,y) = axy+byx$ for some $a, b \in K$. If $a + b \neq 0$, then $f(I_n,Y) = (a+b)Y$ for all $Y\in \M_n(K)$, which implies that $f(\M_n(K)) = \M_n(K)$. While, if $a+b = 0$, then $f(x,y) = a(xy-yx)$, which has the same image as the polynomial $xy-yx$, as long as $a \neq 0$, by Lemma~\ref{MultilinPoly}(1). Statement (2) now follows from Corollary~\ref{ShodaCor}.
\end{proof}

The following fact will be useful in subsequent arguments. For a polynomial $f(x_1, \dots, x_m) \in K\langle x_1, \dots, x_m \rangle$ and $1\leq l\leq m$ we view $f(x_1, \dots, x_l, 1, \dots, 1)$ as a polynomial in $K\langle x_1, \dots, x_l \rangle$.

\begin{corollary} \label{noncomm}
Let $K$ be a field, $m$ a positive integer, and $$f(x_1, \dots, x_m) = \sum_{\sigma \in S_m} a_\sigma x_{\sigma(1)} x_{\sigma(2)} \dots x_{\sigma(m)} \in K\langle x_1, \dots, x_m \rangle$$ a multilinear polynomial.
\begin{enumerate}
\item[$(1)$] If $\, \sum_{\sigma \in S_m} a_\sigma \neq 0,$ then $f(\M_n(K)) = \M_n(K)$. 
\item[$(2)$] If $m \geq 2$ and $f(x_1, x_2, 1, \dots, 1) \neq 0$, then $f(\M_n(K)) \in \{[\M_n(K), \M_n(K)], \M_n(K)\}$.
\end{enumerate}
\end{corollary}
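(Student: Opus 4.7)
My plan is to reduce both parts to facts already established, by specializing all but a small number of the variables to the identity matrix. For part (1), I would substitute $x_2 = x_3 = \cdots = x_m = I_n$ and leave $x_1 = Y$ free. For each $\sigma \in S_m$, the monomial $x_{\sigma(1)}x_{\sigma(2)}\cdots x_{\sigma(m)}$ contains $Y$ in exactly one slot with identities elsewhere, so it collapses to $Y$ independently of $\sigma$. Summing yields
$$f(Y, I_n, \ldots, I_n) = \Bigl(\sum_{\sigma \in S_m} a_\sigma\Bigr) Y,$$
and since the scalar coefficient is nonzero, the right-hand side ranges over all of $\M_n(K)$ as $Y$ does, giving $f(\M_n(K)) = \M_n(K)$.

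For part (2), I would apply Lemma~\ref{m<3}(2) to the two-variable specialization
$$g(x_1, x_2) := f(x_1, x_2, 1, \ldots, 1) \in K\langle x_1, x_2\rangle,$$
which is multilinear of degree at most $2$ and, by the hypothesis, nonzero. Lemma~\ref{m<3}(2) then gives $g(\M_n(K)) \in \{[\M_n(K),\M_n(K)],\,\M_n(K)\}$. Since every value of $g$ is also a value of $f$, we have $g(\M_n(K)) \subseteq f(\M_n(K)) \subseteq \M_n(K)$, and the conclusion follows from a brief case split: if $g(\M_n(K)) = \M_n(K)$ then $f(\M_n(K)) = \M_n(K)$, and otherwise $[\M_n(K),\M_n(K)] \subseteq f(\M_n(K)) \subseteq \M_n(K)$, which collapses to the claimed dichotomy.

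I do not foresee a serious obstacle: both parts are one-substitution reductions to a previously established result. A worthwhile cross-check is the identity $\sum_{\sigma \in S_m} a_\sigma = \alpha + \beta$ when $g = \alpha x_1 x_2 + \beta x_2 x_1$, which shows that the hypothesis of (1) corresponds exactly to the subcase of (2) in which $g$ is not proportional to the commutator $x_1 x_2 - x_2 x_1$, so the two parts of the corollary dovetail. The step requiring the most attention is the ``commutator'' subcase of (2): after the substitution one has the sandwich $[\M_n(K),\M_n(K)] \subseteq f(\M_n(K)) \subseteq \M_n(K)$, and passing from this to the stated set-membership draws on the codimension-$1$ fact $\M_n(K)/[\M_n(K),\M_n(K)] \cong K$ together with closure of $f(\M_n(K))$ under scaling (Lemma~\ref{MultilinPoly}(1)); this is the closest thing to an obstacle in the argument.
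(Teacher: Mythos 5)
Your reduction is the same one the paper uses: specializing all but one variable to $I_n$ gives $f(x_1,1,\dots,1)=(\sum_{\sigma\in S_m}a_\sigma)x_1$, which settles (1), and specializing all but two gives $f(x_1,x_2,1,\dots,1)=bx_1x_2+cx_2x_1$, to which Lemma~\ref{m<3} applies. Part (1) is correct, and the reduction in part (2) is correct as far as it goes.

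The problem is the final step of (2), which you rightly flag as the delicate point but then dispose of incorrectly. From the sandwich $[\M_n(K),\M_n(K)]\subseteq f(\M_n(K))\subseteq \M_n(K)$, the codimension-one fact and closure under scaling do \emph{not} force $f(\M_n(K))$ to be one of the two extremes: the set $[\M_n(K),\M_n(K)]\cup K\cdot E_{11}$ contains all commutators, is closed under scalar multiplication, and is neither $[\M_n(K),\M_n(K)]$ nor $\M_n(K)$ when $n\geq 2$. To exclude such intermediate sets you would need $f(\M_n(K))$ to be closed under adding a commutator to one of its elements (equivalently, closed under addition modulo $[\M_n(K),\M_n(K)]$), and that is precisely the sort of closure whose validity is the content of Question~\ref{mainq}; it is not supplied by Lemma~\ref{MultilinPoly}(1). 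In fairness, the paper's one-sentence proof does not address this point either, and wherever Corollary~\ref{noncomm}(2) is invoked later (e.g., in the proof of Theorem~\ref{mainth}) only the containment $[\M_n(K),\M_n(K)]\subseteq f(\M_n(K))$ is actually used --- and that much your argument does establish. So either state and prove only the containment version, or supply a genuine argument that a value of nonzero trace forces the image to be all of $\M_n(K)$; the scaling argument you sketch does not do it.
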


\begin{proof}
This follows from Lemma~\ref{m<3} upon noting that $f(x_1, 1, \dots, 1) = (\sum_{\sigma \in S_m} a_\sigma)x_1$, and $f(x_1, x_2, 1, \dots, 1) = bx_1x_2 + cx_2x_1$, for some $b, c \in K$.
\end{proof}

\section{A conjecture}

The main goal of this section is to justify a conjecture regarding the set of values of certain multilinear polynomials. We shall first require the following result of Amitsur and Rowen~\cite[Proposition 1.8]{AR} to prove a fact about the linear spans of the images of our polynomials.

\begin{proposition}[Amitsur/Rowen] \label{Amitsur}
Let $D$ be a division ring, $n\geq 2$ an integer, and $A \in \M_n(D)$. Then $A$ is similar to a matrix in $\, \M_n(D)$ with at most one nonzero entry on the main diagonal. In particular, if $A$ has trace zero, then it is similar to a matrix in $\, \M_n(D)$ with only zeros on the main diagonal.
\end{proposition}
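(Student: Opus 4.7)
The plan is induction on $n$, driven by the following key lemma: if $A \in \M_n(D)$ is not a scalar matrix (i.e., not of the form $\lambda I$ for some $\lambda \in D$), then there exists $v \in D^n$ such that $v$ and $Av$ are right $D$-linearly independent. I would prove this by contradiction: assume that for every nonzero $v$ we have $Av = v\mu(v)$ for some $\mu(v) \in D$, pick any right-$D$-independent pair $v, w$, expand $A(v+w)$ two ways, and compare coefficients to force $\mu(v) = \mu(w) = \mu(v+w)$. A short additional argument (using an auxiliary third vector if $v, w$ are not already independent of each other) shows that $\mu$ is a single constant $\mu_0 \in D$, and then evaluating $A e_k = e_k \mu_0$ on the standard basis yields $A = \mu_0 I$, a contradiction. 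Noncommutativity of $D$ requires care here, since the scalars $\mu(v)$ sit on the right of the vectors and the coefficient comparison must respect this.

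With the lemma in hand, the inductive step proceeds as follows. Assume $A$ is nonscalar and pick $v$ with $\{v, Av\}$ right-$D$-independent; extend to a basis $v, Av, w_3, \dots, w_n$ of $D^n$, and pass to this basis. The first column of $A$ becomes $(0, 1, 0, \dots, 0)^{\top}$, so in particular $A_{11} = 0$. Further conjugation by any block-diagonal invertible matrix of the form $\mathrm{diag}(1, Q)$ with $Q \in GL_{n-1}(D)$ preserves the $(1,1)$ entry at $0$ and acts on the lower-right $(n-1)\times(n-1)$ block exactly as conjugation by $Q$, so I would apply the induction hypothesis to that block to reduce its diagonal to at most one nonzero entry. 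The base case $n = 2$ is already handled by the initial change of basis, and scalar matrices are treated as a trivial separate case.

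The ``in particular'' clause then follows from the fact that similarity preserves trace: if $A$ has trace $0$ and is similar to a matrix $B$ whose main diagonal has at most one nonzero entry $b_{ii}$, then $b_{ii} = \tr(B) = \tr(A) = 0$, so $B$ has zero diagonal, as desired.

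I expect the key lemma to be the main obstacle. In the commutative setting the corresponding ``if every vector is an eigenvector, then the operator is a scalar'' argument is standard, but over a noncommutative division ring one has to be careful with left versus right actions on $D^n$ and verify that the constant extracted really produces the two-sided relation $A = \mu_0 I$ rather than some weaker identity. Once that is established, the induction and the trace-zero corollary are routine.
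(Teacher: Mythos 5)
The paper does not prove this proposition at all---it is imported from Amitsur and Rowen \cite{AR} as a black box---so there is no internal argument to compare yours against; what you have written is a from-scratch reconstruction along the lines of Fillmore's classical ``prescribed diagonal'' proof. Your key lemma and its proof sketch are essentially right, but note that the contradiction argument proves more than you state: comparing coefficients forces $\mu$ to be a single constant $\mu_0$, and the relation $\mu(v\alpha)=\alpha^{-1}\mu(v)\alpha$ then forces $\mu_0$ to lie in the \emph{center} $Z(D)$. The true dichotomy is therefore ``central scalar'' versus ``everything else,'' and you need this stronger form: a non-central scalar matrix $\lambda I$ is \emph{not} a trivial case but is handled by the same change of basis, since when $\lambda\notin Z(D)$ one can still find $v$ with $v$ and $\lambda v$ right-independent. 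Meanwhile, for $A=\lambda I$ with $\lambda$ a nonzero \emph{central} element, every conjugate of $A$ equals $A$ itself, which has $n$ nonzero diagonal entries, so the conclusion is simply false; this case is impossible rather than ``trivial,'' and the proposition must be read as excluding it (for the ``in particular'' clause, such trace-zero central scalars exist exactly when $\mathrm{char}(D)$ divides $n$).

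The more serious gap is in the inductive step. After passing to the basis $v, Av, w_3,\dots,w_n$, you apply the induction hypothesis to the lower-right $(n-1)\times(n-1)$ block, but that hypothesis is only usable when the block is itself not a nonzero central scalar matrix, and nothing in your argument guarantees this. If the block comes out equal to $\mu I_{n-1}$ with $\mu$ a nonzero central element, it is similar only to itself and the induction stalls---even though the full matrix is non-scalar and the desired conclusion for it is true. Since the block depends on the choice of the complement $w_3,\dots,w_n$ (the line $vD$ is not $A$-invariant, so there is no canonical quotient map), you must either prove that the complement can always be chosen to make the block non-scalar, or treat the scalar-block configuration
\[
\begin{pmatrix} 0 & r \\ e_1 & \mu I_{n-1} \end{pmatrix}
\]
by a separate explicit conjugation. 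This is precisely the delicate point in Fillmore-type arguments, and as written your proof does not address it.
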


\begin{proposition} \label{smallm}
Let $K$ be a field, $n\geq 2$ and $m\geq 1$ integers, and $f(x_1, \dots, x_m)$ a nonzero multilinear polynomial in $K \langle x_1, \dots, x_m \rangle$. If $n \geq m-1$, then the $K$-subspace $\langle f(\M_n(K))\rangle$ of $\, \M_n(K)$ generated by $f(\M_n(K))$ contains $\, [\M_n(K), \M_n(K)]$.
\end{proposition}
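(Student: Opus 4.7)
The plan is to pick, for any nonzero multilinear $f$ of degree $m \geq 3$, a specific substitution of matrix units whose $f$-value is a nonzero scalar multiple of a single off-diagonal matrix unit; combined with Lemma~\ref{MultilinPoly} and Proposition~\ref{Amitsur}, this will force $\langle f(\M_n(K)) \rangle$ to contain $[\M_n(K), \M_n(K)]$. The degrees $m \leq 2$ are immediately handled by Lemma~\ref{m<3}. For $m \geq 3$, I first reduce to the case where the coefficient $a_e$ of $x_1 x_2 \cdots x_m$ is nonzero: since $f \neq 0$, some $a_\tau$ is nonzero, and the variable change $y_i = x_{\tau(i)}$ produces a new multilinear polynomial $\tilde f(y_1, \dots, y_m)$ whose coefficient of $y_1 y_2 \cdots y_m$ is $a_\tau \neq 0$ and whose image on $\M_n(K)$ coincides with that of $f$.

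Set $q = \min\{m, n\} \geq 2$, and take
\[
x_1 = E_{11}, \qquad x_k = E_{k-1,\,k} \ \text{for}\ 2 \leq k \leq q,
\]
augmented, in the extremal case $m = n+1$, by $x_{n+1} = E_{n, n}$. I claim that with this substitution, the product $x_{\sigma(1)} \cdots x_{\sigma(m)}$ vanishes for every $\sigma \neq e$ in $S_m$, while $x_1 x_2 \cdots x_m$ telescopes to $E_{1, q}$; hence $f$ evaluates to $a_e E_{1, q}$, a nonzero scalar multiple of an off-diagonal matrix unit. To justify the claim, I regard each $x_i$ as a labeled directed edge on the vertex set $\{1, \dots, n\}$: a loop at $1$, the arrows $1 \to 2 \to \cdots \to q$, and, when $m = n+1$, an additional loop at $n$. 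A non-vanishing product corresponds precisely to an Eulerian trail traversing these edges in the order specified by $\sigma$. The in/out-degree count shows that such a trail must begin at $1$ (the unique vertex with out-degree exceeding in-degree) and end at $q$; the loop at $1$ must be traversed first (else one cannot return to use it), the unique out-edge from each vertex then forces $x_2, x_3, \ldots$ in their natural order, and the optional loop at $n$ is forced to come last. So $\sigma = e$ is the unique contributor.

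Once $a_e E_{1, q} \in f(\M_n(K))$ is in hand, Lemma~\ref{MultilinPoly}(1) gives $K \cdot E_{1, q} \subseteq f(\M_n(K))$, and Lemma~\ref{MultilinPoly}(2) applied with permutation matrices yields $K \cdot E_{ij} \subseteq f(\M_n(K))$ for every pair $i \neq j$. Thus $\langle f(\M_n(K)) \rangle$ contains every matrix with zero main diagonal. Since $\langle f(\M_n(K)) \rangle$ is itself closed under conjugation (its spanning set is), Proposition~\ref{Amitsur}---which says that every trace-zero matrix is similar to a zero-diagonal matrix---forces $\langle f(\M_n(K)) \rangle \supseteq [\M_n(K), \M_n(K)]$. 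I expect the main subtlety to lie in the Eulerian-trail uniqueness step, which must be handled in two closely related graph configurations ($m \leq n$ and $m = n+1$); choosing the loops at $1$ and at $n$ is what makes the path the only valid trail, and the rest of the argument is a routine combination of matrix-unit manipulation with Proposition~\ref{Amitsur}.
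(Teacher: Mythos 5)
Your proposal is correct and takes essentially the same approach as the paper: both reduce to the case where the coefficient of $x_1x_2\cdots x_m$ is nonzero, substitute matrix units forming a directed trail that only the identity permutation can traverse without annihilating the product (the paper uses a loop at $i$, an edge $i\to j$, and a cycle returning to $j$; you use the path $1\to 2\to\cdots\to q$ with loops at the endpoints), and then combine conjugation-invariance of $\langle f(\M_n(K))\rangle$ with Proposition~\ref{Amitsur}. The difference in the choice of matrix units is immaterial, and your Eulerian-trail uniqueness argument is sound in both configurations.
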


\begin{proof}
Write $$f(x_1, \dots, x_m) = \sum_{\sigma \in S_m} a_\sigma x_{\sigma(1)} x_{\sigma(2)} \dots x_{\sigma(m)},$$ for some $a_\sigma \in K$. Since $f$ is nonzero, upon relabeling the variables if necessary, we may assume that $a_1 \neq 0$. Furthermore, by Lemma~\ref{m<3}, if $m \leq 2$, then $[\M_n(K), \M_n(K)] \subseteq f(\M_n(K))$ for any $n \geq 2$. We therefore may assume that $m \geq 3$.

Let $i,j \in \{1, \dots, n\}$ be distinct. Since $n \geq m-1 \geq 2$, we can find distinct elements $l_1, \dots, l_{m-3} \in \{1, \dots, n\}\setminus \{i,j\}$. Letting $E_{kl}$ denote the matrix units, we then have $$f(E_{ii}, E_{ij}, E_{jl_1}, E_{l_1l_2}, \dots, E_{l_{m-4}l_{m-3}}, E_{l_{m-3}j})$$ $$= a_1E_{ii} E_{ij} E_{jl_1} E_{l_1l_2} \dots E_{l_{m-4}l_{m-3}} E_{l_{m-3}j} + 0 = a_1E_{ij},$$ since multiplying $E_{ii}, E_{ij}, E_{jl_1}, E_{l_1l_2}, \dots, E_{l_{m-4}l_{m-3}}, E_{l_{m-3}j}$ in any other order yields $0$. (If $m=3$, then we interpret the above equation as $f(E_{ii}, E_{ij}, E_{jj}) = a_1E_{ij}$.) Therefore, $E_{ij} \in f(\M_n(K))$ for all distinct $i$ and $j$, and hence $\langle f(\M_n(K))\rangle$ contains all matrices with zeros on the main diagonal. Now, Lemma~\ref{MultilinPoly} implies that $\langle f(\M_n(K))\rangle$ is closed under conjugation. Hence, by Proposition~\ref{Amitsur}, we have $[\M_n(K), \M_n(K)] \subseteq \langle f(\M_n(K))\rangle$. 
\end{proof}

The claim in the above proposition does not in general hold when $n < m-1$. For example, it is known that the set of values of the polynomial $(xy-yx)^2$ on $\M_2(K)$ is contained in the center $K\cdot I_2$ (see~\cite{Kap}). The same is true of the linearization $$f(x_1, x_2, y_1, y_2) = [x_1,y_1][x_2,y_2] + [x_1,y_2][x_2,y_1] + [x_2,y_1][x_1,y_2] + [x_2,y_2][x_1,y_1]$$ of this polynomial. Thus, $\langle f(\M_2(K)) \rangle = K\cdot I_2 \not\supseteq [\M_2(K), \M_2(K)]$.

An affirmative answer to Question~\ref{mainq} would imply that in the above proposition, if $n \geq m-1$, then $f(\M_n(K))$, and not just $\langle f(\M_n(K)) \rangle$, contains $[\M_n(K), \M_n(K)]$. Since it is suspected that the question does have an affirmative answer (e.g., see~\cite{KMR}), we make the following conjecture.

\begin{conjecture} \label{conjecture}
Let $K$ be a field, $n\geq 2$ and $m\geq 1$ integers, and $f(x_1, \dots, x_m)$ a nonzero multilinear polynomial in $K \langle x_1, \dots, x_m \rangle$. If $n \geq m-1$, then $f(\M_n(K)) \supseteq [\M_n(K), \M_n(K)]$.
\end{conjecture}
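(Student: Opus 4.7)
The plan is to reduce the conjecture to a constructive problem using Proposition 3.1 as the main structural tool. Since $f(\M_n(K))$ is closed under conjugation by Lemma 2.3(2), and by Proposition 3.1 every matrix in $[\M_n(K), \M_n(K)]$ is similar to one with zero main diagonal, it suffices to show that every matrix $M \in \M_n(K)$ with zero diagonal lies in $f(\M_n(K))$. The target is then the $(n^2 - n)$-dimensional space spanned by the off-diagonal matrix units $E_{ij}$, which has a clean combinatorial description that plays well with the matrix-unit calculations in the proof of Proposition 3.3.

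Next I would try to peel off the ``easy'' cases by induction on $m$. For each $k \in \{1, \dots, m\}$, the polynomial $g_k(x_1, \dots, x_{m-1}) = f(x_1, \dots, x_{k-1}, 1, x_k, \dots, x_{m-1})$ is multilinear of degree $m-1$, and since $n \geq m-1 \geq (m-1)-1$, the inductive hypothesis would give $g_k(\M_n(K)) \supseteq [\M_n(K), \M_n(K)]$ whenever $g_k$ is nonzero, and hence the same inclusion for $f$ by substituting $I_n$ for $x_k$. We may thus assume every $g_k$, and more generally every restriction obtained by replacing any subset of variables by $I_n$, vanishes identically. This is a very strong constraint on the coefficients $a_\sigma$, encompassing Corollary 2.5 (which is the $l=1,2$ case) and forcing many further symmetric sums of the $a_\sigma$ to equal zero.

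For the remaining highly constrained $f$, I would generalize the construction of Proposition 3.3 to realize a target $M = \sum_{i \neq j} m_{ij} E_{ij}$. The natural ansatz is to let each $A_k$ be a linear combination of matrix units whose indices form edges along a directed path (or small family of directed paths) in $\{1, \dots, n\}$, introducing free scalar parameters $c^{(k)}_{ij}$; since $n \geq m-1$, a path of length $m-1$ is available. Under this ansatz most products $A_{\sigma(1)} A_{\sigma(2)} \cdots A_{\sigma(m)}$ vanish identically, and those that survive contribute matrix units whose coefficients are explicit degree-$m$ monomials in the $c^{(k)}_{ij}$. The goal is to show that, by varying the parameters, the induced polynomial map onto the zero-diagonal subspace is surjective.

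The hard part is establishing this surjectivity uniformly in $f$. The induced map has degree $m$ in the parameters and its image depends sensitively on which coefficients $a_\sigma$ are nonzero; the constraints obtained in the previous step must be exploited, not merely acknowledged, to guarantee that the map hits every off-diagonal matrix. For $m \leq 3$ the symmetric group $S_m$ is small enough and the reductions above leave few distinct coefficient patterns, so the problem becomes amenable to case analysis; for general $m$ the combinatorial explosion of $|S_m|$ and the intricate cancellations among contributions to $f(A_1,\dots,A_m)$ make a uniform argument elusive. Plausible routes forward include enriching the ansatz (for instance, mixing path matrices with diagonal perturbations, or using block structures that exploit $n \geq m-1$ more strongly), or identifying canonical forms for $f$ under the natural $S_m$-action on its coefficients. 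It is precisely this combinatorial step that appears to keep the general conjecture open, which is consistent with the paper establishing only the $m \leq 3$ case.
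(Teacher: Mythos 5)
The statement you are addressing is labelled a \emph{conjecture} in the paper: the author proves it only for $m\leq 3$ (Lemma~\ref{m<3} for $m\leq 2$, Theorem~\ref{mainth} for $m=3$, the latter under the extra hypothesis that $K$ has at least $n$ elements), and the general case is open. Your proposal does not close it either, and you say so yourself in the final paragraph. The first two reduction steps are sound and essentially reproduce machinery already in the paper: the passage to zero-diagonal matrices via Proposition~\ref{Amitsur} and conjugation-invariance is exactly how Proposition~\ref{smallm} and Theorem~\ref{mainth} begin, and the induction that lets you assume every substitution of $I_n$ for a variable kills $f$ is the natural extension of Corollary~\ref{noncomm}. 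But the entire difficulty of the conjecture lives in the case those reductions leave behind, and there your argument stops at an ansatz (path-supported matrix units with free parameters) together with an unproven surjectivity claim for the induced degree-$m$ parameter map. Naming the obstruction is not the same as overcoming it; as written, the proposal is a research plan, not a proof.

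It is worth noting where your ansatz diverges from what actually works for $m=3$. After the same reductions, the paper does not attack the surjectivity of a matrix-unit parameter map. Instead it shows that the surviving coefficient constraints force $f$ into the canonical form $b[x,[z,y]]+c[z,[x,y]]$, and then exploits that form structurally: substituting the \emph{same} diagonal matrix $M$ (with distinct diagonal entries) for two of the variables annihilates the $c$-term via $[M,M]=0$, and Lemma~\ref{Amitsur2} then realizes any zero-diagonal matrix as $[M,[E,M]]$. This ``identify a canonical form, then choose substitutions that kill all but one term'' move is qualitatively different from, and more likely to generalize than, a brute-force count of surviving monomials in a path ansatz --- indeed, for general $m$ the set of multilinear polynomials all of whose unit-substitutions vanish is spanned by products of higher commutators, and it is the lack of a usable canonical form for these (not the combinatorial explosion of $|S_m|$ per se) that keeps the conjecture open. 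Your proposal should be read as a partial reduction consistent with the paper, not as a proof of the statement.
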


Lemma~\ref{m<3} shows that this conjecture holds for $m<3$. In the next section we shall show that it holds for $m=3$ as well, if $K$ has at least $n$ elements.

\section{The three-variable case}

We shall require another fact proved by Amitsur and Rowen~\cite[Lemma 1.2]{AR}.

\begin{lemma}[Amitsur/Rowen] \label{Amitsur2}
Let $K$ be a field and $n \geq 2$ an integer. Suppose that $A = (a_{ij}) \in \M_n(K)$ is a diagonal matrix with $a_{ii}\neq a_{jj}$ for $i\neq j$. Then $\, [A,\M_n(K)]$ consists of all the matrices with only zeros on the main diagonal.
\end{lemma}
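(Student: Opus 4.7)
The plan is a direct one-line matrix computation. Writing $A = \mathrm{diag}(a_{11}, \dots, a_{nn})$ and a general $X = (x_{ij}) \in \M_n(K)$, the diagonality of $A$ yields $(AX)_{ij} = a_{ii}x_{ij}$ and $(XA)_{ij} = a_{jj}x_{ij}$, so
$$[A,X]_{ij} = (a_{ii} - a_{jj})\,x_{ij}.$$
This single identity encodes everything needed.

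From it, both containments follow at once. Setting $i = j$ gives $[A,X]_{ii} = 0$, so every element of $[A,\M_n(K)]$ has zeros on the main diagonal. Conversely, given any $Y = (y_{ij}) \in \M_n(K)$ with $y_{ii} = 0$ for all $i$, the hypothesis that $a_{ii} \neq a_{jj}$ for $i \neq j$ allows me to define $x_{ij} = y_{ij}/(a_{ii}-a_{jj})$ for $i \neq j$ and (say) $x_{ii} = 0$. Plugging back into the formula above gives $[A,X] = Y$, which establishes the reverse containment.

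There is essentially no obstacle here: the distinctness of the diagonal entries of $A$ is exactly what is needed to solve for $X$ on each off-diagonal position, and the field hypothesis supplies the multiplicative inverses $1/(a_{ii} - a_{jj})$. In Lie-theoretic language, one is simply observing that $\mathrm{ad}_A$ acts on the matrix unit $E_{ij}$ by the scalar $a_{ii} - a_{jj}$, which is nonzero exactly when $i \neq j$, so $\mathrm{ad}_A$ is a bijection on the span of the off-diagonal matrix units.
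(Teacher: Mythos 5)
Your proof is correct and complete: the identity $[A,X]_{ij}=(a_{ii}-a_{jj})x_{ij}$ immediately gives both containments, and the distinctness of the diagonal entries together with the field hypothesis is exactly what is needed to invert $\mathrm{ad}_A$ on the off-diagonal part. The paper itself gives no proof of this lemma, quoting it as Lemma 1.2 of Amitsur and Rowen, so there is nothing to compare against; your computation is the standard argument and would serve as a self-contained justification.
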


We are now ready for our main result.

\begin{theorem} \label{mainth}
Let $n \geq 2$ be an integer, $K$ a field with at least $n$ elements, and $f \in K \langle x, y, z \rangle$ any nonzero multilinear polynomial. Then $f(\M_n(K))$ contains every matrix having trace $\, 0$.
\end{theorem}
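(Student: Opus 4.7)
The plan is to first apply the reductions from Corollary~\ref{noncomm} to cut the problem down to a narrow class of ``residual'' polynomials, and then handle those using the diagonal-matrix tools from Proposition~\ref{Amitsur} and Lemma~\ref{Amitsur2} together with Shoda's theorem. Write $f(x,y,z) = \sum_{\sigma \in S_3} a_\sigma x_{\sigma(1)} x_{\sigma(2)} x_{\sigma(3)}$. If $\sum_\sigma a_\sigma \neq 0$, then Corollary~\ref{noncomm}(1) gives $f(\M_n(K)) = \M_n(K)$, which already contains every trace-zero matrix. Otherwise, look at the three two-variable specializations $f(x,y,1)$, $f(x,1,z)$, and $f(1,y,z)$. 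If any one of them is nonzero as a polynomial in two variables, then Lemma~\ref{m<3}(2) shows its image on $\M_n(K)$ contains $[\M_n(K),\M_n(K)]$, and that image is visibly a subset of $f(\M_n(K))$ obtained by setting the missing variable to $I_n$.

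So assume all three specializations vanish identically. Collecting coefficients of $x_i x_j$ and $x_j x_i$ in each of the three specializations yields six linear relations on $(a_1,\dots,a_6)$, and solving them gives $a_1 = a_6$, $a_2 = a_4$, $a_3 = a_5$, and $a_1 + a_2 + a_3 = 0$. Setting $\alpha = a_1$ and $\gamma = a_3$ (so $a_2 = -\alpha - \gamma$), a direct expansion shows that
\[
f(x,y,z) = \alpha\,[x,[y,z]] + \gamma\,[y,[x,z]],
\]
with $(\alpha,\gamma) \neq (0,0)$ because $f \neq 0$. I then split on whether $\gamma = 0$.

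If $\gamma \neq 0$, specialize $z = y$ to obtain $f(x,y,y) = \gamma\,[y,[x,y]]$. Choose $y = D$ to be a diagonal matrix with pairwise distinct diagonal entries $d_1,\ldots,d_n$, which exists because $|K| \geq n$. A direct computation shows that for any $X = (x_{ij}) \in \M_n(K)$, the $(i,j)$-entry of $[D,[X,D]]$ is $-(d_i-d_j)^2 x_{ij}$ and the diagonal vanishes; since each $(d_i-d_j)^2$ is a nonzero scalar, as $X$ varies $\gamma\,[D,[X,D]]$ runs over every matrix with zero main diagonal. Combining this with Proposition~\ref{Amitsur} (every trace-zero matrix is conjugate to one with zero diagonal) and Lemma~\ref{MultilinPoly}(2) (conjugation-invariance of $f(\M_n(K))$) finishes this sub-case.

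If instead $\gamma = 0$, then $\alpha \neq 0$ and $f = \alpha\,[x,[y,z]]$. Given a trace-zero matrix $M$, Proposition~\ref{Amitsur} and conjugation-invariance let me assume $M$ has zero diagonal. Take $A$ diagonal with distinct entries and invoke Lemma~\ref{Amitsur2} to find $N$ (which may be chosen of trace zero) with $[A,N] = M/\alpha$; Theorem~\ref{Shoda} then produces $B,C$ with $N = [B,C]$, and so $f(A,B,C) = \alpha\,[A,[B,C]] = M$. I expect the chief difficulty to be the classification step in the second paragraph: recognizing that once Corollary~\ref{noncomm} is exhausted the surviving polynomials form a two-parameter family spanned by double commutators (related via the Jacobi identity). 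Once that normal form is in hand, the remaining sub-case analysis is a straightforward composition of Shoda's theorem with the Amitsur--Rowen machinery.
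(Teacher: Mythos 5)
Your proof is correct, and its skeleton coincides with the paper's: the same reduction via Corollary~\ref{noncomm}, the same linear system forcing $a_1=a_6$, $a_2=a_4$, $a_3=a_5$, $a_1+a_2+a_3=0$, and the same conclusion that the surviving polynomials form a two-parameter family of double commutators (your normal form $\alpha[x,[y,z]]+\gamma[y,[x,z]]$ and the paper's $b[x,[z,y]]+c[z,[x,y]]$ are just different bases of that two-dimensional space). Where you diverge is the endgame. The paper avoids any case split: after normalizing one coefficient to $1$ by renaming variables and scaling, it substitutes the same diagonal matrix $M$ with distinct entries for the two variables appearing in the \emph{other} double commutator, so that term dies via $[M,M]=0$, and then applies Lemma~\ref{Amitsur2} twice to realize the target zero-diagonal matrix as $[M,[E,M]]$. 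You instead split on $\gamma$: for $\gamma\neq 0$ you set $y=z=D$ and verify directly that $X\mapsto\gamma[D,[X,D]]$ scales the $(i,j)$-entry by the nonzero factor $-\gamma(d_i-d_j)^2$, a self-contained surjectivity argument onto the zero-diagonal matrices that does not even need Lemma~\ref{Amitsur2}; for $\gamma=0$ you route through Theorem~\ref{Shoda} by factoring the inner commutator. Both finishes are valid; the paper's is more uniform, while yours makes the eigenvalue structure of the double adjoint map visible and shows that Shoda's theorem alone handles the pure $[x,[y,z]]$ case. The one step you should spell out in the $\gamma=0$ sub-case is why $N$ may be chosen of trace zero: since $A$ is diagonal it commutes with every diagonal matrix, so $N$ can be corrected by a diagonal matrix (e.g.\ a multiple of $E_{11}$) without altering $[A,N]$.
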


\begin{proof}
If $f$ has degree at most $2$, then this follows from Lemma~\ref{m<3}. Thus, let us assume that the degree of $f$ is $3$. We can then write $$f(x,y,z) = axyz + bxzy + cyxz + dyzx + ezxy + gzyx \ (a, b, c, d, e, g \in K).$$ If $a+b+c+d+e+g \neq 0$, then $[\M_n(K), \M_n(K)] \subseteq f(\M_n(K))$, Corollary~\ref{noncomm}(1). Let us therefore suppose that $a+b+c+d+e+g = 0$. In this case $$f(x,y,z) = a(xyz-zyx) + b(xzy-zyx) + c(yxz-zyx) + d(yzx-zyx) + e(zxy-zyx).$$ Moreover, if any of $f(1,y,z)$, $f(x,1,z)$, or $f(x,y,1)$ are nonzero, then by Corollary~\ref{noncomm}(2), $[\M_n(K), \M_n(K)] \subseteq f(\M_n(K))$. Thus, let us assume that $$0=f(1,y,z)=a(yz-zy) + c(yz-zy) + d(yz-zy) = (a+c+d)(yz-zy),$$ which implies that $0 = a+c+d$. Setting $0=f(x,1,z)$ and $0=f(x,y,1)$ we similarly get $0 = a+b+c$ and $0 = a+b+e$. Solving the resulting system of equations gives $b=d$, $c=e$, and $a=-b-c$. Therefore, $$f(x,y,z) = (-b-c)(xyz-zyx) + b(xzy-zyx+yzx-zyx) + c(yxz-zyx+zxy-zyx)$$ $$=b(xzy-zyx+yzx-xyz)+c(yxz-zyx+zxy-xyz)$$ $$=b(x[z,y]-[z,y]x) + c(z[x,y]-[x,y]z) = b[x,[z,y]] + c[z,[x,y]].$$ Now, since $f(x,y,z) \neq 0$, renaming the variables, if necessary, we may assume that $b \neq 0$. Then, by Lemma~\ref{MultilinPoly}(1), $f(x,y,z)$ and $b^{-1}f(x,y,z)$ have the same set of values. Therefore, we may assume that $f(x,y,z) = [x,[z,y]] + b[z,[x,y]]$ for some $b \in K$. 

Let $A \in \M_n(K)$ be a matrix having trace $0$. We wish to show that $A \in f(\M_n(K))$. By Proposition~\ref{Amitsur}, $A$ is conjugate to a matrix $A' \in \M_n(K)$ with only zeros on the main diagonal. Hence, by Lemma~\ref{MultilinPoly}(2), to conclude the proof it is enough to show that that $A' \in f(\M_n(K))$. By our assumption on $K$, we can find a diagonal matrix $M \in \M_n(K)$ with distinct elements of $K$ on its main diagonal. By Lemma~\ref{Amitsur2}, there is some $B \in \M_n(K)$ such that $A'=[M,B]$. Now, write $B = C+D$, where $C$ has only zeros on the main diagonal and $D$ is diagonal. Then $$A'=[M,B]=[M,C+D]=[M,C]+[M,D]=[M,C],$$ since $M$ commutes with all diagonal matrices. Using Lemma~\ref{Amitsur2} once again, we can find a matrix $E \in \M_n(K)$ such that $C=[E,M]$. Finally, we have $$f(M,M,E) = [M,[E,M]] + b[E,[M,M]] = [M,[E,M]] = [M,C]=A',$$ as desired.
\end{proof}

Let us next describe the degree-three multilinear polynomials $f$ satisfying $f(\M_n(K)) = [\M_n(K), \M_n(K)]$.

\begin{lemma}
Let $n \geq 2$ be an integer, $K$ a field, and $$f(x_1, x_2, x_3) = \sum_{\sigma \in S_3} a_\sigma x_{\sigma(1)} x_{\sigma(2)} x_{\sigma(3)} \in K \langle x_1, x_2, x_3 \rangle$$ a multilinear polynomial of degree $3$. Then $f(\M_n(K)) \subseteq [\M_n(K), \M_n(K)]$ if and only if $\, \sum_{\sigma \in S_3} a_\sigma=0 = \sum_{\sigma \in A_3} a_\sigma,$ where $A_3 \subseteq S_3$ is the alternating subgroup.
\end{lemma}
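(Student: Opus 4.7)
The plan is to reduce the claim to a single trace computation. Since Theorem~\ref{Shoda} identifies $[\M_n(K),\M_n(K)]$ with the set of trace-zero matrices, the condition $f(\M_n(K)) \subseteq [\M_n(K),\M_n(K)]$ is equivalent to $\tr(f(A,B,C)) = 0$ for all $A,B,C \in \M_n(K)$.

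The key step is to evaluate $\tr(f(A,B,C))$ in closed form. By cyclic invariance of the trace, the three ``even'' monomials $ABC$, $BCA$, $CAB$ (arising from $\sigma \in A_3$) all have the same trace, and similarly the three ``odd'' monomials $BAC$, $ACB$, $CBA$ (arising from $\sigma \notin A_3$) all have the same trace. Writing $\alpha = \sum_{\sigma \in A_3} a_\sigma$ and $\beta = \sum_{\sigma \notin A_3} a_\sigma$, one obtains the identity
$$\tr(f(A,B,C)) = \alpha \tr(ABC) + \beta \tr(ACB).$$
Note that $\alpha + \beta = \sum_{\sigma \in S_3} a_\sigma$ and the two conditions in the lemma are equivalent to $\alpha = 0 = \beta$.

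For the ``if'' direction, this is immediate: if $\alpha = \beta = 0$, then $\tr(f(A,B,C)) = 0$ identically. For the ``only if'' direction, I would argue in two stages. First, if $\alpha + \beta = \sum_\sigma a_\sigma \neq 0$, then Corollary~\ref{noncomm}(1) gives $f(\M_n(K)) = \M_n(K)$, which contains $E_{11}$ (a matrix of trace $1 \neq 0$), contradicting the hypothesis. Hence $\alpha + \beta = 0$, and the trace identity becomes $\tr(f(A,B,C)) = \alpha\, \tr(A[B,C])$. Then testing with $A = E_{11}$, $B = E_{12}$, $C = E_{21}$ yields $[B,C] = E_{11} - E_{22}$ and $A[B,C] = E_{11}$, whose trace is $1$, forcing $\alpha = 0$ (and thus $\beta = 0$).

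There is no significant obstacle here; the main item to get right is the bookkeeping in the cyclic-invariance step, and the mild worry that some step might fail in small characteristic. This is avoided by choosing the explicit test matrices above, whose relevant trace is $1$ in any field.
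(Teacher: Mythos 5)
Your proof is correct, and the two halves compare differently with the paper's. The ``only if'' direction is essentially the paper's argument: both of you invoke Corollary~\ref{noncomm}(1) to dispose of the case $\sum_{\sigma\in S_3}a_\sigma\neq 0$, and both then test on $E_{11},E_{12},E_{21}$ --- your computation $\tr(E_{11}[E_{12},E_{21}])=\tr(E_{11})=1$ is exactly the trace of the paper's evaluation $f(E_{11},E_{12},E_{21})=a_1E_{11}+a_{(123)}E_{11}+a_{(132)}E_{22}$. The ``if'' direction, however, takes a genuinely different route. You identify $[\M_n(K),\M_n(K)]$ with the trace-zero matrices and use cyclic invariance of the trace to obtain $\tr(f(A,B,C))=\alpha\,\tr(ABC)+\beta\,\tr(ACB)$ with $\alpha=\sum_{\sigma\in A_3}a_\sigma$ and $\beta=\sum_{\sigma\notin A_3}a_\sigma$, which vanishes identically once $\alpha=\beta=0$; this is correct, since the cosets of $A_3$ in $S_3$ are precisely the two cyclic classes of monomials. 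The paper instead rewrites $f$ in the free algebra as $a_{(123)}[x_2x_3,x_1]+a_{(132)}[x_3,x_1x_2]+a_{(13)}[x_3,x_2x_1]+a_{(23)}[x_1x_3,x_2]$, so every value is visibly a sum of four commutators. Your version is shorter and makes the role of $A_3$ versus $S_3\setminus A_3$ transparent, but it relies on the Shoda/Albert/Muckenhoupt identification of the commutator subspace with the trace-zero subspace; the paper's explicit decomposition is a purely formal identity that needs no trace argument and in fact proves $f(\M_n(R))\subseteq[\M_n(R),\M_n(R)]$ over an arbitrary ring $R$.
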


\begin{proof}
By Corollary~\ref{noncomm}(1), we may assume that $\sum_{\sigma \in S_3} a_\sigma=0$. Now, suppose that $\sum_{\sigma \in A_3} a_\sigma = 0$. Then we must also have $\sum_{\sigma \in S_3\setminus A_3} a_\sigma = \sum_{\sigma \in S_3}  a_\sigma- \sum_{\sigma \in A_3} a_\sigma = 0$. Therefore, $$f(x_1, x_2, x_3) = \sum_{\sigma \in A_3} a_\sigma x_{\sigma(1)} x_{\sigma(2)} x_{\sigma(3)} + \sum_{\sigma \in S_3 \setminus A_3} a_\sigma x_{\sigma(1)} x_{\sigma(2)} x_{\sigma(3)}$$ $$=\sum_{\sigma \in A_3 \setminus \{1\}} a_\sigma (x_{\sigma(1)} x_{\sigma(2)} x_{\sigma(3)} -x_1x_2x_3)+ \sum_{\sigma \in S_3 \setminus (A_3 \cup \{(12)\})} a_\sigma (x_{\sigma(1)} x_{\sigma(2)} x_{\sigma(3)}- x_2x_1x_3)$$ $$= a_{(123)}(x_2 x_3 x_1 -x_1x_2x_3) + a_{(132)}(x_3 x_1 x_2 -x_1x_2x_3)$$  $$+ a_{(13)}(x_3 x_2 x_1 -x_2x_1x_3) + a_{(23)}(x_1 x_3 x_2 -x_2x_1x_3)$$ $$= a_{(123)}[x_2x_3,x_1] + a_{(132)}[x_3,x_1x_2] + a_{(13)}[x_3,x_2x_1] + a_{(23)}[x_1x_3,x_2].$$ Thus, $f(\M_n(K)) \subseteq [\M_n(K), \M_n(K)]$.

On the other hand, $f(E_{11},E_{12},E_{21}) = a_1E_{11} + a_{(123)}E_{11} + a_{(132)}E_{22}$ has trace $\sum_{\sigma \in A_3} a_\sigma$. Hence, if this sum is not zero, then $f(\M_n(K)) \not\subseteq [\M_n(K), \M_n(K)]$.
\end{proof}

Applying Theorem~\ref{mainth} to this lemma we obtain the following.

\begin{corollary}
Let $n \geq 2$ be an integer, $K$ a field with at least $n$ elements, and $$f(x_1, x_2, x_3) = \sum_{\sigma \in S_3} a_\sigma x_{\sigma(1)} x_{\sigma(2)} x_{\sigma(3)} \in K \langle x_1, x_2, x_3 \rangle$$ a nonzero multilinear polynomial of degree $3$. Then $f(\M_n(K)) = [\M_n(K), \M_n(K)]$ if and only if $\, \sum_{\sigma \in S_3} a_\sigma=0 = \sum_{\sigma \in A_3} a_\sigma,$ where $A_3 \subseteq S_3$ is the alternating subgroup.
\end{corollary}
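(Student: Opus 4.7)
The plan is to deduce this corollary directly by combining the preceding Lemma with Theorem~\ref{mainth}; no new calculations should be required. The two ingredients feed in from opposite directions, giving the two inclusions needed for equality.

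For the forward direction, I would suppose that $f(\M_n(K)) = [\M_n(K), \M_n(K)]$. In particular $f(\M_n(K)) \subseteq [\M_n(K), \M_n(K)]$, so the previous Lemma applies and yields the two coefficient identities $\sum_{\sigma \in S_3} a_\sigma = 0$ and $\sum_{\sigma \in A_3} a_\sigma = 0$. (Note that the Lemma makes no assumption on the size of $K$, so this direction in fact holds over any field.)

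For the reverse direction, I would assume the two coefficient identities hold. The preceding Lemma then gives the inclusion $f(\M_n(K)) \subseteq [\M_n(K), \M_n(K)]$. On the other hand, since $f$ is nonzero and $K$ has at least $n$ elements, Theorem~\ref{mainth} yields the opposite inclusion $[\M_n(K), \M_n(K)] \subseteq f(\M_n(K))$. Equality follows.

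I do not anticipate any real obstacle; the only point that deserves explicit mention is that the hypothesis that $K$ has at least $n$ elements is used only through the application of Theorem~\ref{mainth}, and the nonzero hypothesis on $f$ is needed for the same step (Theorem~\ref{mainth} requires $f \ne 0$).
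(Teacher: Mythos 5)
Your proposal is correct and matches the paper's argument exactly: the paper simply states that the corollary follows by ``applying Theorem~\ref{mainth} to this lemma,'' which is precisely the two-inclusion combination you describe. Your explicit note about where the hypotheses on $K$ and on $f \neq 0$ are used is accurate and a reasonable elaboration of what the paper leaves implicit.
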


We note that in general the condition $\sum_{\sigma \in S_m} a_\sigma=0 = \sum_{\sigma \in A_m} a_\sigma$ does not characterize the nonzero multilinear polynomials $f(x_1, \dots, x_m)$ satisfying $f(\M_n(K)) = [\M_n(K), \M_n(K)]$. For example, if $m=2$, then $\sum_{\sigma \in S_m} a_\sigma=0 = \sum_{\sigma \in A_m} a_\sigma$ implies that $f(x_1, x_2)=0$. Also, $f(x_1, x_2, x_3, x_4) = x_1x_2x_3x_4-x_4x_3x_2x_1$ satisfies $\sum_{\sigma \in S_4} a_\sigma=0 = \sum_{\sigma \in A_4} a_\sigma$, but $f(\M_n(K)) \not\subseteq [\M_n(K), \M_n(K)]$ for any $n \geq 2$, since $f(E_{11},E_{12},E_{22},E_{21}) = E_{11} \notin [\M_n(K), \M_n(K)]$.

Let us conclude with a fact about the image of the degree-three multilinear polynomial $[x,[y,z]]$ on matrices over an arbitrary ring. We first require the following lemma.

\begin{lemma}\label{crux}
Let $R$ be a unital ring, let $n\geq 2$ be an integer, and let $A = (a_{ij}) \in \M_n(R)$ be such that for each $j \in \{0, 1, \dots, n-1\}$ we have $\, \sum_{i=1}^{n-j} a_{i, i+j} = 0$ $($i.e., sums of the elements along the diagonals above and including the main diagonal are $\, 0$$)$. Then $A = DX-XD$ for some $D \in \M_n(R)$ and $X = \sum_{i=1}^{n-1} E_{i+1,i}$, where $E_{ij}$ are the matrix units.
\end{lemma}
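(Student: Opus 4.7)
The plan is to reduce the equation $A = DX - XD$ to an explicit triangular system of scalar equations in the entries of $D$, solve that system row by row, and then use the diagonal-sum hypothesis to verify the single consistency condition that arises in each row.

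First, I would compute $DX$ and $XD$ entrywise for an arbitrary $D = (d_{ij}) \in \M_n(R)$. Since right-multiplication by $X = \sum_{i=1}^{n-1} E_{i+1,i}$ shifts the columns of $D$ one step to the left (with the last column of $DX$ equal to zero), and left-multiplication by $X$ shifts the rows of $D$ one step down (with the first row of $XD$ equal to zero), one obtains
\[ (DX - XD)_{ij} \;=\; d_{i,\,j+1} \,-\, d_{i-1,\,j}, \]
with the boundary conventions $d_{i,n+1} = 0$ and $d_{0,j} = 0$. The equation $A = DX - XD$ is therefore equivalent to the system $a_{ij} = d_{i,j+1} - d_{i-1,j}$ for $1 \leq i,j \leq n$. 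Note that this reduction requires no commutativity and is valid over any unital ring $R$.

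Next, I would construct $D$ by hand. Setting $d_{i,1} = 0$ for all $i$, the equations with $i = 1$ and $1 \leq j \leq n-1$ force $d_{1,k} = a_{1,k-1}$ for $k = 2,\dots,n$, and for $i \geq 2$ and $1 \leq j \leq n-1$ the recursion $d_{i,j+1} = a_{ij} + d_{i-1,j}$ determines all remaining entries of $D$.

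The only obstacle is to verify that this $D$ satisfies the $n$ equations in the column $j = n$, namely $a_{1,n} = 0$ and $a_{i,n} = -d_{i-1,n}$ for $i \geq 2$. The first is exactly the $j = n-1$ case of the hypothesis (a one-term diagonal). For the rest, iterating the recursion gives
\[ d_{i-1,\,n} \;=\; \sum_{s=0}^{i-2} a_{i-1-s,\,n-1-s}, \]
which is the sum of the first $i-1$ entries along the $(n-i)$th superdiagonal of $A$; the remaining entry of that diagonal is precisely $a_{i,n}$. The hypothesized vanishing of the full diagonal sum is therefore exactly $a_{i,n} + d_{i-1,n} = 0$, completing the verification.
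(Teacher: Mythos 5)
Your proposal is correct: the entrywise description of $DX-XD$, the recursive construction of $D$, and the verification that the leftover equations $a_{i,n}=-d_{i-1,n}$ are exactly the hypothesized vanishing of the diagonal sums are all accurate, and nothing in the argument uses commutativity of $R$. This is essentially the paper's proof in different clothing --- unwinding your recursion $d_{i,j+1}=a_{ij}+d_{i-1,j}$ with $d_{i,1}=0$ produces precisely the matrix $D=\sum_{i=0}^{n-2}X^iAZ^{i+1}$ (with $Z=\sum_{i=1}^{n-1}E_{i,i+1}$) that the paper writes down in closed form and then verifies by a shift-matrix computation, so the two arguments construct the same $D$ and invoke the hypothesis on $A$ at the same point.
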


\begin{proof}
Letting $Z = \sum_{i=1}^{n-1} E_{i,i+1}$ we have $ZX = \sum_{i=1}^{n-1}E_{ii} = I - E_{nn}.$ For any $l \in \{0,1, \dots, n-1\}$ and $k \in \{1, 2, \dots, n\}$ we then have $$E_{kk}X^lAZ^lE_{nn} = E_{kk} \bigg( \sum_{i=1}^{n-l} E_{i+l,i} \bigg) A \bigg( \sum_{i=1}^{n-l} E_{i,i+l} \bigg) E_{nn} = E_{k,k-l}AE_{n-l,n} = a_{k-l, n-l}E_{kn}$$ if $l < k$, and $E_{kk}X^lAZ^lE_{nn} = 0 \cdot AZ^lE_{nn} = 0$ if $l \geq k$.

Letting $D = \sum_{i=0}^{n-2} X^iAZ^{i+1}$, we have $$DX - XD = \bigg( \sum_{i=0}^{n-2} X^iAZ^i \bigg) ZX - \sum_{i=0}^{n-2} X^{i+1}AZ^{i+1}$$ $$= \bigg( A + \sum_{i=1}^{n-2} X^iAZ^i \bigg) (I - E_{nn}) - \sum_{i=1}^{n-1} X^iAZ^i = A - \bigg( \sum_{i=0}^{n-2} X^iAZ^i \bigg) E_{nn} - X^{n-1}AZ^{n-1}$$ $$= A - \bigg( \sum_{i=0}^{n-2} X^iAZ^i \bigg)E_{nn} - X^{n-1}AE_{1n} = A - \bigg( \sum_{i=0}^{n-1} X^iAZ^i \bigg) E_{nn}.$$ Now, for every $k \in \{1, 2, \dots, n\}$ we have $$E_{kk} \bigg( \sum_{i=0}^{n-1} X^iAZ^i \bigg) E_{nn} = \sum_{i=0}^{k-1} E_{kk}X^iAZ^iE_{nn} = \bigg( \sum_{i=0}^{k-1} a_{k-i, n-i} \bigg) E_{kn} = \bigg( \sum_{i=1}^{k} a_{i, i+(n-k)} \bigg) E_{kn},$$ by the computation in the first paragraph. Finally, the last sum is $0$, by hypothesis on $A$, and hence $(\sum_{i=0}^{n-1} X^iAZ^i)E_{nn} = 0$, showing that $DX - XD = A$.
\end{proof}

\begin{theorem} \label{zero-sup}
Let $R$ be unital ring, let $n \geq 2$ be an integer, let $f(x,y,z) = [x,[z,y]]$, and let $A = (a_{ij}) \in \M_n(R)$ be such that for each $j \in \{0, 1, \dots, n-1\}$ we have $\, \sum_{i=1}^{n-j} a_{i, i+j} = 0$. Then $A = f(D, X, Y)$ for some $D \in \M_n(R)$, $X = \sum_{i=1}^{n-1} E_{i+1,i}$, and $Y = \sum_{i=1}^{n} (i-1)E_{ii}$.
\end{theorem}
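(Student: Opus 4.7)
The plan is to reduce the statement directly to Lemma~\ref{crux} by observing that $Y$ and $X$ have been chosen precisely so that $[Y,X] = X$. Granting this, the polynomial $f(D,X,Y) = [D,[Y,X]]$ collapses to $[D,X] = DX - XD$, and Lemma~\ref{crux} then furnishes a matrix $D \in \M_n(R)$ satisfying $A = DX - XD$, which is exactly what the theorem asks for.

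So the only substantive step is the commutator calculation $[Y,X] = X$. Since $Y = \sum_{i=1}^n (i-1)E_{ii}$ is diagonal and $X = \sum_{i=1}^{n-1} E_{i+1,i}$ has $1$'s on the first subdiagonal, left-multiplication by $Y$ scales the $i$-th row of $X$ by the $(i,i)$-entry of $Y$, giving $YX = \sum_{i=1}^{n-1} i\, E_{i+1,i}$, while right-multiplication by $Y$ scales columns, giving $XY = \sum_{i=1}^{n-1} (i-1)E_{i+1,i}$. Subtracting, $YX - XY = \sum_{i=1}^{n-1} E_{i+1,i} = X$. This computation uses only matrix-unit arithmetic and integer coefficients, so it is valid over any unital ring $R$.

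Combining the two steps, $f(D,X,Y) = [D,[Y,X]] = [D,X] = DX - XD$, and Lemma~\ref{crux} (applied to $A$, whose diagonal-sum hypothesis is exactly the hypothesis of that lemma) produces a suitable $D$. There is no real obstacle: the hard work is already contained in Lemma~\ref{crux}, and the only new ingredient is the observation that the specific $Y$ in the statement is an eigenvector, with eigenvalue $1$, for the adjoint action of $X$ (up to sign)---which is what turns the nested commutator $[x,[z,y]]$ into a plain commutator involving $X$.
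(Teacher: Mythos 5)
Your proposal is correct and follows essentially the same route as the paper: verify $[Y,X]=X$ (the paper does this via the general formula $[Y,M]=((i-j)m_{ij})$, you by direct matrix-unit arithmetic), then invoke Lemma~\ref{crux} to write $A=[D,X]$ and conclude $f(D,X,Y)=[D,[Y,X]]=[D,X]=A$.
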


\begin{proof}
For any matrix $M = (m_{ij}) \in \M_n(K)$ we have $$[Y,M] 
=\left(\begin{array}{ccccc}
0 & 0 & 0 & \dots & 0 \\
m_{21} & m_{22} & m_{23} & \dots & m_{2n} \\
2m_{31} & 2m_{32} & 2m_{33} & \dots & 2m_{3n} \\
\vdots & \vdots & \vdots & \ddots & \vdots \\
\end{array}\right) - 
\left(\begin{array}{ccccc}
0 & m_{12} & 2m_{13} & \dots & (n-1)m_{1n} \\
0 & m_{22} & 2m_{23} & \dots & (n-1)m_{2n} \\
0 & m_{32} & 2m_{33} & \dots & (n-1)m_{3n} \\
\vdots & \vdots & \vdots & \ddots & \vdots \\
\end{array}\right)$$
$$= ((i-1)m_{ij})-((j-1)m_{ij}) = ((i-j)m_{ij}).$$ Hence, in particular, $[Y,X] = X$.

Now, by Lemma~\ref{crux}, $A = [D,X]$ for some $D \in \M_n(R)$. We therefore have $$f(D, X, Y) = [D,[Y,X]] = [D,X]=A,$$ proving the statement.
\end{proof}

This argument extends easily to all polynomials of the form $[x_1,[x_2, \dots, [x_{m-1},x_m]]\dots]$.

\section*{Acknowledgements} I am grateful to Mikhail Chebotar for bringing Question~\ref{mainq} to my attention, and to the referee for suggesting a better way to prove the main result.

\vspace{.1in}

\noindent
Department of Mathematics \newline
University of Colorado \newline
Colorado Springs, CO 80918 \newline
USA \newline

\noindent Email: \tt{zmesyan@uccs.edu}

\end{document}